\title{On the random greedy $F$-free hypergraph process}
\date{December 27, 2014}
\author{Daniela K\"uhn, Deryk Osthus and Amelia Taylor}
\thanks{The research leading to these results was partially supported by the European Research Council
under the European Union's Seventh Framework Programme (FP/2007--2013) / ERC Grant
Agreements no. 258345 (D.~K\"uhn) and 306349 (D.~Osthus).}
\newtheorem{firstthm}{Proposition}[section]
\newtheorem{theorem}[firstthm]{Theorem}
\newtheorem{lemma}[firstthm]{Lemma}
\newtheorem{cor}[firstthm]{Corollary}
\newtheorem{conj}[firstthm]{Conjecture}
\newdimen\margin   
\def\textno#1&#2\par{%
   \margin=\hsize
   \advance\margin by -4\parindent
          \setbox1=\hbox{\sl#1}%
   \ifdim\wd1 < \margin
      $$\box1\eqno#2$$%
   \else
      \bigbreak
      \hbox to \hsize{\indent$\vcenter{\advance\hsize by -3\parindent
      \it\noindent#1}\hfil#2$}%
      \bigbreak
   \fi}
\begin{document}

\def\COMMENT#1{}
\def\TASK#1{}

\def\eps{{\varepsilon}}
\newcommand{\ex}{\mathbb{E}}
\newcommand{\pr}{\mathbb{P}}
\newcommand{\cA}{\mathcal{A}}
\newcommand{\cB}{\mathcal{B}}
\newcommand{\cC}{\mathcal{C}}
\newcommand{\cD}{\mathcal{D}}
\newcommand{\cF}{\mathcal{F}}
\newcommand{\cH}{\mathcal{H}}
\newcommand{\cS}{\mathcal{S}}

\begin{abstract}  \noindent
Let $F$ be a strictly $k$-balanced $k$-uniform hypergraph with $e(F)\geq |F|-k+1$ and maximum co-degree at least two.
The random greedy $F$-free process constructs a maximal $F$-free hypergraph as follows.
Consider a random ordering of the hyperedges of the complete $k$-uniform hypergraph $K_n^k$ on $n$ vertices. 
Start with the empty hypergraph on $n$ vertices. Successively consider the hyperedges $e$ of $K_n^k$ in the given ordering, and add $e$ to the
existing hypergraph provided that $e$ does not create a copy of $F$.
We show that asymptotically almost surely this process terminates at a hypergraph with $\tilde{O}(n^{k-(|F|-k)/(e(F)-1)})$ hyperedges. This is best possible up to logarithmic factors.
\end{abstract}

\maketitle

\section{Introduction}\label{sec:intro}

\subsection{Results}
Fix a $k$-uniform hypergraph $F$. In this paper, we study the following random greedy process, which constructs a maximal $F$-free $k$-uniform hypergraph. Assign a birthtime which is uniformly distributed in $[0,1]$ to each hyperedge of the complete $k$-uniform hypergraph $K_n^k$ on $n$ vertices. Start with the empty hypergraph on $n$ vertices at time $p=0$. Increase $p$ and each time that a new hyperedge is born, add it to the hypergraph provided that it does not create a copy of $F$ (edges with equal birthtime are added in any order). Denote the resulting hypergraph at time $p$ by $R_{n,p}$. 

The random greedy graph process (i.e.~the case when $k=2$) has been studied for many graphs. The initial motivation (see for example \cite{ersuwi}) was to study the Ramsey number $R(3,t)$. Indeed, the best current lower bounds on $R(3,t)$ were obtained via the study of the
triangle-free process (\cite{bohkeev2}, \cite{FGM}). Osthus and Taraz \cite{greedygraph} gave an upper bound on the number of edges in the graph $R_{n,1}$ when $F$ is strictly $2$-balanced, showing that a.a.s. $R_{n,1}$ has maximum degree $O(n^{1-(|F|-2)/(e(F)-1)} (\log n)^{1/(\Delta(F)-1)})$. 
(Here a.a.s.~stands for `asymptotically almost surely', i.e.~for the property that an event occurs with probability tending to one as $n$ tends to infinity.) Results for the cases when $F=C_4$ and $F=K_4$ were obtained independently by Bollob\'as and Riordan \cite{bolrio}. Bohman and
Keevash~\cite{bohkeev} showed that a.a.s.~$R_{n,1}$ has minimum degree $\Omega(n^{1-(|F|-2)/(e(F)-1)} (\log n)^{1/(e(F)-1)})$ whenever $F$ is strictly $2$-balanced and conjectured that this gives the correct order of magnitude. Improved upper bounds have been obtained for some graphs. For instance, the number of edges has been determined asymptotically when $F$ is a cycle (\cite{boh}, \cite{bohkeev2}, \cite{FGM}, \cite{pico}, \cite{warn}) and when $F=K_4$ (\cite{warn2}, \cite{wolf}).
Picollelli \cite{pico2} determined asymptotically the number of edges when $F$ is a diamond, i.e.~the graph obtained by removing one edge from $K_4$. Note that this graph is not strictly $2$-balanced.

Much less is known about the process when $F$ is a $k$-uniform hypergraph and $k\geq 3$. The only known upper bound is due to Bohman,
Mubayi and Picollelli \cite{bomupi}, who studied the $F$-free process when $F$ is a $k$-uniform generalisation of a graph triangle
(with an application to certain Ramsey numbers).
In this paper, we obtain a generalisation of the upper bound in \cite{greedygraph} to strictly $k$-balanced hypergraphs. Here we say that a $k$-uniform hypergraph $F$ is \emph{strictly $k$-balanced} if $|F|\geq k+1$ and for all proper subgraphs $F'\subsetneq F$ with $|F'|\geq k+1$ we have
$$\frac{e(F)-1}{|F|-k}> \frac{e(F')-1}{|F'|-k}.$$
We also need the following definition. Given a hypergraph $H$ and $i\in \mathbb{N}$, we define the \emph{maximum i-degree} of $H$ by
$$\Delta_i(H):=\max\{d_H(U):U\subseteq V(H), |U|=i\},$$ where $d_H(U)$ is the number of hyperedges in $H$ containing $U$.

\begin{theorem}\label{thm:main}
Let $k\in \mathbb{N}$ be such that $k\geq 2$. Let $F$ be a strictly $k$-balanced $k$-uniform hypergraph which has $v$ vertices and $h\geq v-k+1$ hyperedges.%
\COMMENT{condition on no. edges ensures that $\frac{v-k}{h-1}\leq 1$, we need this to ensure that $t\geq\Delta_{k-1}(F)$ later on, otherwise $\cS=\emptyset$.}
Suppose $\Delta_{k-1}(F)\geq 2$.%
\COMMENT{we need this for the bound on $\mu_1$ at the beginning of Lemma~\ref{lem:XS}.}
Then there exists a constant $c$ such that a.a.s. 
\begin{equation}\label{eq:main}
\Delta_{k-1}(R_{n,1})< t \hspace{0.5cm} \text{ where } \hspace{0.5cm} t:=cn^{1-\frac{v-k}{h-1}} (\log n)^{\frac{3}{\Delta_{k-1}(F)-1}-\frac{1}{h-1}}.
\end{equation}
In particular, a.a.s. $R_{n,1}$ has at most $tn^{k-1}$ hyperedges.
\end{theorem}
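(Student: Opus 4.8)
The plan is to track, via the differential equations / martingale method, the evolution of the random greedy $F$-free process and show that the $(k-1)$-degrees stay below the target threshold $t$. I would set $p$ to range over $[0,1]$ (or more conveniently rescale so that the relevant window is $p\in[0,p_{\max}]$ with $p_{\max}$ a suitable power of $\log n$ divided by a power of $n$), and introduce the key random variables: for each $(k-1)$-set $U$, the degree $d_U(p)=d_{R_{n,p}}(U)$, and for each ``open'' hyperedge $e$ (one not yet creating a copy of $F$) an indicator or a collection of counting variables $X_{\cS}(e)$ that record, for various rooted configurations $\cS$ extending into a copy of $F$, how many near-copies of $F$ would be completed by adding $e$. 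The heuristic is that at time $p$ the hypergraph $R_{n,p}$ behaves like the binomial random hypergraph $G^{(k)}_{n,p}$ conditioned on $F$-freeness, so a typical $(k-1)$-set has degree about $p\cdot n$, and this reaches the threshold precisely when $p$ is of order $t/n$; the logarithmic correction in the exponent of $t$ comes from the fact that the process must be followed slightly beyond the ``natural'' scaling window and the error terms accumulate.

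The main steps, in order, would be: (1) Define the tracked variables and their ideal trajectories $\hat d(p)$, $\hat X_{\cS}(p)$, chosen so that the one-step expected changes match the derivatives of these trajectories; the strict $k$-balancedness of $F$ is exactly what guarantees that the dominant contribution to creating a new copy of $F$ comes from adding a single edge to an already-present ``almost-copy'' consisting of $F$ minus one edge, and that all denser partial structures are lower-order. (2) Prove concentration: show that as long as all variables are within their prescribed error bounds, each relevant quantity is a supermartingale (after subtracting its trend plus an error buffer), with bounded one-step changes, so that a Freedman / Azuma-type inequality combined with a union bound over all $O(n^{k-1})$ sets $U$ (and polynomially many configurations $\cS$) shows the bad event has probability $o(1)$. (3) Run this ``self-correcting'' estimate up to the time $p^*$ at which $\hat d(p^*)$ would reach $t$; since by the choice of constant $c$ and the $(\log n)$ factor this time is before $p=1$ only if the degrees have already saturated, conclude that in fact a.a.s.\ every $(k-1)$-set has degree $<t$ throughout, in particular in $R_{n,1}$. (4) Finally, since each hyperedge contains $\binom{k}{k-1}=k$ sets of size $k-1$, the number of hyperedges is at most $\frac{1}{k}\sum_U d_U(1) < \frac{1}{k} n^{k-1} t \le t n^{k-1}$, giving the ``in particular'' clause.

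The hard part will be step (2), and within it the choice of the right family of auxiliary variables and error functions so that the whole system is genuinely self-correcting: one must control not just the degrees but the number of ``open'' edges at each $(k-1)$-set and, more delicately, the number of partial copies of $F$ of every intermediate density, because a deviation in one of these feeds back into the trend for the degrees. The bookkeeping is exactly where strict $k$-balancedness and the hypothesis $h\ge v-k+1$ (ensuring $(v-k)/(h-1)\le 1$, so the exponent and the relevant ``sparsity'' parameters behave) get used, and it is also where the somewhat lossy $(\log n)^{3/(\Delta_{k-1}(F)-1)}$ factor enters — one pays a logarithm each time one wants the error buffer to dominate the fluctuations across the full range of $p$, and the co-degree condition $\Delta_{k-1}(F)\ge 2$ is what makes $1/(\Delta_{k-1}(F)-1)$ finite. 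The earlier parts of the paper (not shown in this excerpt) presumably establish the precise concentration lemma and the estimates on $\hat X_{\cS}$; I would invoke those and concentrate the write-up on assembling them into the union bound and the final counting step.
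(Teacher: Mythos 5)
Your proposal takes the Bohman--Keevash differential-equations route, which is genuinely different from the paper's argument, but as written it has a gap at step~(3) that I do not see how to close. The self-correcting martingale machinery controls the tracked variables only up to some time $p^*$ inside (or slightly beyond) the natural scaling window; it does not show that the process \emph{stops adding edges} at the $(k-1)$-sets after $p^*$. The process runs until $p=1$, and a hyperedge is added whenever it fails to complete a copy of $F$, so $d_U(p)$ can keep growing long after the trajectories $\hat d(p)$ cease to be meaningful. To conclude $\Delta_{k-1}(R_{n,1})<t$ you would have to show that by time $p^*$ every $(k-1)$-set is already ``saturated'' --- i.e.\ essentially all of its potential neighbours are closed --- and that is exactly the statement the tracking does not deliver. (This is why the DEM results of Bohman and Keevash yield \emph{lower} bounds on the final size for general strictly balanced $F$, while matching upper bounds are known only for special graphs such as cycles and $K_4$; if your step~(3) worked as stated it would prove the general conjecture.) Your step~(4) is fine, and your heuristic identification of the critical $p$ and the role of strict $k$-balancedness are correct.

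The paper avoids this difficulty entirely and never tracks the process. It fixes $p$ at the largest value for which only a small fraction of the hyperedges of $H_{n,p}$ lie in a copy of $F$, and uses the deterministic containment $H_{n,p}^-\subseteq R_{n,p}\subseteq R_{n,1}$, where $H_{n,p}^-$ is $H_{n,p}$ with all copies of $F$ deleted. Supposing some $(k-1)$-set $V$ has a neighbourhood $T$ of size $t$ in $R_{n,1}$, it shows via an Erd\H{o}s--Tetali upper tail bound, a Janson-type lower tail bound for disjoint families, and a clustering lemma that a.a.s.\ $H_{n,p}^-$ contains a copy of $\hat F$ (namely $F$ minus the hyperedges through $U$) rooted at $V$ with $N_F(U)$ mapped into $T$; together with the edges from $V$ into $T$ this forces a copy of $F$ in $R_{n,1}$, a contradiction. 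The order in which edges are added after the critical $p$ is irrelevant to this argument, which is precisely the point where your approach gets stuck.
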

Note that Theorem~\ref{thm:main} applies, for example, to all $k$-uniform cliques $K_v^k$ on $v\geq k+1$ vertices%
\COMMENT{$$\frac{\binom{v}{k}-1}{\binom{v-1}{k}-1}\cdot \frac{v-k-1}{v-k}>\frac{\binom{v}{k}}{\binom{v-1}{k}}\cdot \frac{v-k-1}{v-k}=\frac{v(v-k-1)}{(v-k)^2}\geq 1$$ for all $v\geq k+2$. If $v=k+1$ then $K_v^k$ is trivially strictly
$k$-balanced.}
and more generally to all balanced complete $\ell$-partite $k$-uniform hypergraphs with $\ell\ge k$ and more than $k$ vertices.%
\COMMENT{(AT: changed subscript $V_k$ to $V_\ell$; changed strict inequalities in ``$x\ell(k-1)-k^2>0$ iff $x> \frac{k^2}{\ell(k-1)}$ iff $v'>k^2/(k-1)$, which holds since $v'\ge k+2$'' to ``$\geq$" so true for $k=2$.) Let $F$ be the $\ell$-partite, $k$-uniform hypergraph $K^k_{r,r,\dots,r}$ with $r\geq 2$, $\ell\geq k$, $|F|=r\ell\geq k+1$ (if $r=1$ graph is a clique, see previous case). 
Let $V_1, \dots V_\ell$ denote the vertex classes of $F$. $F$ has $k$-density
$$d_k(F)=\frac{h-1}{v-k}=\frac{r^k\binom{\ell}{k}-1}{r\ell-k}.$$
Let $H$ be any subgraph of $F$ on $k+1 \leq v'<r\ell$ vertices. 
Let $h'$ be the number of edges of $H$.
We claim that $h' \le (v'/\ell)^k \binom{\ell}{k}$.
To see this, let $v_i$ be the number of vertices of $H$ which lie in $V_i$. 
Suppose that the partition $P$ is not equitable, ie $v_1\ge v_2+2$, say.
Let $Q$ be the partition where 1 vertex from the first class is moved to the second one
and let $H^*$ be the resulting graph.
For any $k$-tuple $S$ which includes $1,2$, the number of edges 
induced by the corresponding vertex classes is larger in $H^*$ than in $H$.
For any $k$-tuple $S$ which includes $1$ but not $2$, let $S':=S \cup \{ 2 \} \setminus \{ 1 \}$.
Then the sum of the number of edges induced by $S$ and the number of edges induced by $S'$ in the graph $H$
is equal to the corresponding sum in the graph $H^*$.
Similarly if $S$ contains neither $1$ nor $2$, the number of edges is not affected.
This proves that the number of edges is maximized by an equitable partition.
Considering a fractional version of this argument (ie moving eg half a vertex) implies that
$h' \le (v'/\ell)^k \binom{\ell}{k}$.
Finally note that $\frac{(v'/\ell)^k\binom{\ell}{k}-1}{v'-k}<d_k(F).$
Indeed, to see this suppose first that $v'\ge k+2$. Let $x:=v'/\ell$. It suffices to show that the function
$\frac{x^k\binom{\ell}{k}-1}{x\ell-k}$ is strictly monotone increasing. The numerator of the derivative
is $kx^{k-1}\binom{\ell}{k}(x\ell-k)-\ell(x^k\binom{\ell}{k}-1)=x^{k-1}\binom{\ell}{k}[x\ell(k-1)-k^2]+\ell\geq \ell>0$,
since $x\ell(k-1)-k^2\geq 0$ iff $x\geq \frac{k^2}{\ell(k-1)}$ iff $v'\geq k^2/(k-1)$, which holds since $v'\ge k+2$, $k\geq 2$.
It remains to consider the case when $v'= k+1$. If $\ell>k$ then $h'\le \binom{k+1}{k}=k+1$ and so $d_k(H)=k$.
Thus in this case we need to show that $(r\ell-k)k<r^k\binom{\ell}{k}-1$, which holds (for all $r,k\geq 2$, $k^{1/(k-1)}\leq r \implies rk\leq r^k \implies (r\ell-k)k=rk\ell-k^2<r^k\ell-1\leq r^k\binom{\ell}{k}-1$).
If $\ell=k$ then $h'\le 2$ and so $d_k(H)=1<\frac{r^{k-1}+r^{k-2}+\dots+1}{k}=\frac{r^k-1}{k(r-1)}=d_k(F)$ (holds for any $r\geq 2$).
So $F$ is strictly $k$-balanced. Theorem~\ref{thm:main} applies since $\Delta_{k-1}(F)\ge r\geq 2$ and 
$r^k\binom{\ell}{k} \geq \ell r \geq \ell r -k+1$ (for all $r,k,\ell \geq 2$).
}

Bennett and Bohman \cite{benboh} studied a random greedy independent set algorithm in certain quasi-random hypergraphs. 
Their result can be applied in the context of the $F$-free process to show that if $F$ is a strictly $k$-balanced $k$-uniform hypergraph and every vertex of $F$ lies in at least two hyperedges, then a.a.s. $R_{n,1}$ has $\Omega(n^{k-(|F|-k)/(e(F)-1)} (\log n)^{1/(e(F)-1)})$ hyperedges.%
\COMMENT{see appendix - for statement of theorem and justification of conditions.}
Up to logarithmic factors, this matches the upper bound given in Theorem~\ref{thm:main}.

\subsection{An open question}
There are many natural open questions related to the random greedy $F$-free process.
Here we discuss bounds on the number of edges in $R_{n,1}$ when $F$ is an $\ell$-cycle.
Theorem~\ref{thm:main} applies in the case when $F$ is a $k$-uniform tight cycle.
However, there are other natural notions of a hypergraph cycle: Given $\ell\in \mathbb{N}$ with $\ell <k$, we say that a $k$-uniform hypergraph $C_{\ell,h}$ is an \emph{$\ell$-cycle} of length $h$ if there is a cyclic ordering of its vertices $x_1, \dots, x_{h(k-\ell)}$ and a corresponding ordering on its hyperedges $e_0, \dots, e_{h-1}$ such that $e_i=\{x_{i(k-\ell)+1},\dots, x_{i(k-\ell)+k}\}$. So consecutive hyperedges on the cycle intersect in exactly $\ell$ vertices. The case when $\ell=k-1$ corresponds to $C_{\ell,h}$ being a tight cycle of length~$h$.
It is easy to check that all $\ell$-cycles are strictly $k$-balanced,%
\COMMENT{Let $1\leq \ell\leq k-1$, $v=h(k-\ell)\geq k+1$. Subgraphs with highest $k$-density are paths with $v'=h'(k-\ell)+\ell$ vertices and $h'$ edges. Then $$\frac{h'-1}{v'-k}=\frac{h'-1}{(h'-1)(k-\ell)}=\frac{1}{k-\ell}$$
and
$$\frac{h-1}{(k-\ell)h-k}>\frac{1}{k-\ell} \Leftrightarrow h-1> h-\frac{k}{k-\ell} \Leftrightarrow \frac{k}{k-\ell}>1,$$ so ok.}
but only tight cycles satisfy the co-degree condition in Theorem~\ref{thm:main}.
In the case when $\ell \geq k/2$, $\ell$-cycles meet the conditions in~\cite{benboh}. We conjecture that the bound on the number of hyperedges
in~\cite{benboh} is of the correct magnitude for any $\ell$.

\begin{conj}\label{conj:1}
Let $\ell, k \in \mathbb{N}$ be such that $k\geq 2$ and $k>\ell$ and let $F:=C_{\ell,h}$ be the $\ell$-cycle of length $h$. Then a.a.s. $R_{n,1}$ has $\Theta(n^{\frac{h\ell}{h-1}}(\log n)^\frac{1}{h-1})$ hyperedges.
\end{conj}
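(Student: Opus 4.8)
The plan is to establish matching upper and lower bounds separately, both guided by the heuristic that the $F$-free process tracks the binomial random hypergraph $G^k(n,p)$ (in which each edge of $K_n^k$ is present independently with probability $p$) up to a critical time $p^*$, after which almost every remaining edge is closed. Writing $v=h(k-\ell)$ and $e(F)=h$, the expected number of copies of $F=C_{\ell,h}$ through a fixed edge of $G^k(n,p)$ is of order $n^{v-k}p^{h-1}$, which becomes $\Theta(1)$ at
$$p^*:=n^{-\frac{v-k}{h-1}}=n^{\frac{h\ell}{h-1}-k},$$
using $v-k=k(h-1)-h\ell$. Since $R_{n,p^*}$ should then have about $p^*\binom{n}{k}=\Theta(n^{h\ell/(h-1)})$ edges, both the exponent of $n$ and (after inserting the log-corrections coming from the criticality window) the exponent $1/(h-1)$ of $\log n$ in Conjecture~\ref{conj:1} are explained. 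Thus the whole task reduces to showing that $R_{n,p}$ stays concentrated on this trajectory down to $p^*$ and then halts.

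For the lower bound one wants $R_{n,p}$ to still be adding edges near $p^*$, i.e.\ that a positive proportion of edges remain open at time $\Theta(p^*(\log n)^{1/(h-1)})$. When $\ell\geq k/2$ this is exactly the random greedy independent set result of Bennett and Bohman \cite{benboh} applied to the conflict hypergraph $\cH$ whose vertices are the edges of $K_n^k$ and whose hyperedges are the copies of $F$; as noted in the excerpt, the degree and codegree hypotheses of \cite{benboh} hold in this regime. For $\ell<k/2$ these hypotheses must be re-examined: two edges of $C_{\ell,h}$ now meet in fewer than $k/2$ vertices, so a pair of edges of $K_n^k$ lies in comparatively many copies of $F$ relative to the single-edge degree, and the pairwise codegrees of $\cH$ can be large. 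I would either verify a weakened codegree bound still strong enough to run the Bennett--Bohman martingales, or run a direct nibble/differential-equation analysis of the number $Q(p)$ of open edges, tracking $Q(p)$ together with the extension counts indexed by the subpaths of the cycle and showing $Q(p)$ stays within $(1\pm o(1))$ of its expected value $\approx\binom{n}{k}(1-p)\exp(-\Theta(n^{v-k}p^{h-1}))$ throughout $[0,p^*]$.

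The upper bound is the substantive obstacle, since Theorem~\ref{thm:main} does not apply: for $\ell<k-1$ any two edges of $C_{\ell,h}$ meet in at most $\ell<k-1$ vertices, so $\Delta_{k-1}(F)=1$ and the codegree hypothesis of Theorem~\ref{thm:main} fails. The approach would be to extend the self-correcting martingale analysis underlying Theorem~\ref{thm:main} to the case $\Delta_{k-1}(F)=1$, but with the tracked statistics chosen along the $\ell$-overlap structure of the cycle rather than the $(k-1)$-degree. Concretely, for each ordered $\ell$-set $W$ (the type of overlap along the cycle) I would track the number of open edges extending $W$, together with the partial-cycle extension variables counting present edges that form an initial segment $e_0,\dots,e_j$ of a copy of $F$ ending at a prescribed $\ell$-set. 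The aim is to show that a.a.s.\ by time $p^\dagger:=Cp^*(\log n)^{1/(h-1)}$ every edge of $K_n^k$ has been closed, so that at most $O(p^\dagger\binom{n}{k})=\tilde O(n^{h\ell/(h-1)})$ edges are ever added, with the power of $\log n$ matched to the criticality window as in \cite{greedygraph}.

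The main difficulty, and the precise point where $\Delta_{k-1}(F)\geq 2$ was used in Theorem~\ref{thm:main}, is controlling the one-step change of these variables. When $\Delta_{k-1}(F)\geq 2$ a newly added edge shares a $(k-1)$-set with another edge of every relevant copy of $F$, which both limits how many copies a single new edge can belong to and drives the closing of nearby edges; when $\Delta_{k-1}(F)=1$ the relevant copies overlap only in $\ell$-sets, so one new edge can lie in many weakly-correlated partial cycles, and the boundedness and trend estimates needed to keep the martingales concentrated become far more delicate. Showing that these error terms remain lower order — equivalently, that the $\ell$-overlap extension counts do not develop atypically large clusters before time $p^\dagger$ — is where I expect the real work to lie, and is presumably why the statement is left as a conjecture.
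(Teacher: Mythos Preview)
The statement you are attempting to prove is labelled \textbf{Conjecture~1.4} in the paper and is presented in the subsection ``An open question''; the paper gives \emph{no proof} of it. There is therefore no proof in the paper to compare your proposal against.

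Your write-up is not a proof but a heuristic outline, and you yourself recognise this in the final sentence. The factual content of the outline is accurate and matches what the paper says: the exponent $h\ell/(h-1)$ arises exactly as you compute from $v=h(k-\ell)$ and $v-k=k(h-1)-h\ell$; the lower bound for $\ell\ge k/2$ does follow from Bennett--Bohman (this is stated just before Conjecture~1.4); and Theorem~1.2 is inapplicable for $\ell<k-1$ precisely because $\Delta_{k-1}(C_{\ell,h})=1$, as the paper notes (``only tight cycles satisfy the co-degree condition in Theorem~\ref{thm:main}''). Your identification of where the argument of Theorem~1.2 breaks down is also correct, though the obstruction in the paper's proof is concrete rather than vaguely about ``one-step changes'': the lower bound~(\ref{eq:mu1}) on $\mu_1=\ex[X]$ contains a factor $t^{d-1}$, and when $d=\Delta_{k-1}(F)=1$ this factor is $1$, so $\mu_1$ is only of order $(\log n)^2$ rather than the $t(\log n)^2$ needed to beat the union bound over the $n^{t+k-1}$ choices of $(T,V)$.

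What you have not done is supply any actual argument for either the lower bound when $\ell<k/2$ or the upper bound when $\ell<k-1$. Phrases such as ``I would either verify a weakened codegree bound \ldots\ or run a direct nibble/differential-equation analysis'' and ``the approach would be to extend the self-correcting martingale analysis'' describe aspirations, not proofs. Since the paper itself leaves this open, your proposal is a fair summary of the state of play, but it should be labelled as such rather than as a proof.
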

One motivation for Conjecture~\ref{conj:1} is that $p=n^{h\ell/(h-1)-k}(\log n)^{1/(h-1)}$ is the threshold for the property that every hyperedge in $H_{n,p}$ lies in an $\ell$-cycle of length $h$.%
\COMMENT{ROUGH idea: When $p=n^{-(v-k)/(h-1)}(\log n)^{1/(h-1)}$, fix some edge $f$ in $H_{n,p}$ and let $X$ be the expected number of copies of $F$ in $H_{n,p}$ containing $f$. $\mu:=\ex(X)\approx n^{v-k}p^{h-1}\approx k^2\log n$. So that, if we assume all events are independent, we could use Chernoff to get
$$\pr[X\leq \mu/2] < e^{-\mu/8}\approx \frac{1}{n^{2k}}.$$ Sum over all possible $k$-sets get that the probability there is an edge in $H_{n,p}$ which is not contained in a copy of $F$ is $o(1)$.}

\subsection{Sketch of the argument}
Rather than studying the random greedy process itself, we are able to prove Theorem~\ref{thm:main} by obtaining precise information
about the random binomial hypergraph $H_{n,p}$. 
(This idea was first used in~\cite{greedygraph}.)
More precisely,  write $H_{n,p}$ for the random binomial $k$-uniform hypergraph on $n$ vertices with hyperedge probability $p$, i.e., each hyperedge is included in $H_{n,p}$ with probability $p$, independently of all other hyperedges. We write $H_{n,p}^-$ for the hypergraph formed by removing all copies of $F$ from $H_{n,p}$. Note that $H_{n,p}$ can also be viewed as the random hypergraph consisting of all hyperedges with birthtime at most $p$. Thus, 
for all $p\in [0,1]$ we have
$$H_{n,p}^- \subseteq R_{n,p} \subseteq R_{n,1}.$$
We will always assume that $K_n^k$, $H_{n,p}$, $H_{n,p}^-$ and $R_{n,p}$ use the vertex set $[n]$.

In Section~\ref{sec:tools}, we collect some large deviation inequalities. The proof of Theorem~\ref{thm:main} is given in Section~\ref{sec:proof}, the strategy is as follows. We first identify the largest point $p$ where we can still use $H_{n,p}$ to approximate the behaviour of $H_{n,p}^-$ (i.e.~for this $p$, only a small proportion of edges of $H_{n,p}$ lie in a copy of $F$). 
Now let $U$ be a set of $k-1$ vertices in $F$ such that $d_F(U)=\Delta_{k-1}(F)$.
Let $\hat{F}$ be the subgraph of $F$ obtained by deleting all those hyperedges which contain~$U$. Let $t$ be as in \eqref{eq:main}. Suppose for a contradiction that there exists a $(k-1)$-set $V$ of degree $t$
in $R_{n,1}$ and let $T$ be the neighbourhood of $V$ in $R_{n,1}$. 
We will show that in this case we would almost certainly find a copy $\alpha$ of
$\hat{F}$ in $H_{n,p}^-[T \cup V]$ which maps $U$ to $V$. 
Since $H_{n,p}^-\subseteq R_{n,1}$, $\alpha$ would also be a copy of $\hat{F}$ in $R_{n,1}[T \cup V]$ which maps $U$ to $V$.
But this actually yields a copy of $F$ in $R_{n,1}$,
a contradiction. So a.a.s. $\Delta_{k-1}(R_{n,1})<t$. It is perhaps surprising that for our analysis the order of hyperedges added after this
critical point $p$ is irrelevant.

\section{Tools}\label{sec:tools}
Let $\cS$ be a collection of subsets of $E(K_n^k)$. For each $\alpha \in \cS$, let $I_\alpha$ denote the indicator variable which equals one if all hyperedges in $\alpha$ lie in $H_{n,p}$ and zero otherwise. Set
$$X:=\sum_{\alpha \in \cS} I_\alpha \hspace{1cm}\text{ and } \hspace{1cm} \mu:=\ex[X].$$

Let $Y$ be the size of a largest hyperedge-disjoint collection of elements of $\cS$ in $H_{n,p}$ (i.e.~the maximum size of a set $\cS'\subseteq \cS$
such that $I_\alpha=1$ for all $\alpha\in \cS'$ and $\alpha\cap \alpha'=\emptyset$ for all distinct $\alpha,\alpha'\in \cS'$).%
  \COMMENT{DK: new bracket}
Erd\H{o}s and Tetali \cite{erdtet} proved the following upper tail bound on $Y$.

\begin{theorem}{\cite{erdtet}.}\label{thm:erdtet}
For every $a\in \mathbb{N}$, we have $\pr[Y\geq a]\leq (e\mu/a)^a$.
\end{theorem}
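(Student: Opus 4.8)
The plan is to combine a straightforward first moment (union bound) estimate with the observation that hyperedge-disjoint members of $\cS$ have independent indicator variables. First I would record the (essentially tautological) fact that $\{Y\ge a\}$ is exactly the event that there exist distinct $\alpha_1,\dots,\alpha_a\in\cS$ which are pairwise hyperedge-disjoint and satisfy $I_{\alpha_1}=\dots=I_{\alpha_a}=1$: any hyperedge-disjoint subcollection of $\cS$ witnessing $Y\ge a$ contains such an $a$-tuple, and conversely such an $a$-tuple witnesses $Y\ge a$. Taking a union bound over all such families gives
$$\pr[Y\ge a]\le\sum\pr\big[I_{\alpha_1}=\dots=I_{\alpha_a}=1\big],$$
where the sum ranges over all unordered $a$-element families $\{\alpha_1,\dots,\alpha_a\}$ of pairwise hyperedge-disjoint elements of $\cS$.

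Next I would exploit the defining property of $H_{n,p}$, namely that distinct hyperedges of $K_n^k$ are present independently. Since the $\alpha_i$ in such a family are pairwise hyperedge-disjoint, the events $\{I_{\alpha_i}=1\}$ are determined by disjoint blocks of hyperedges and hence are mutually independent, so that $\pr[I_{\alpha_1}=\dots=I_{\alpha_a}=1]=\prod_{i=1}^a\ex[I_{\alpha_i}]$. Dropping the pairwise-disjointness constraint only adds (nonnegative) terms to the sum, so
$$\pr[Y\ge a]\le\sum_{\{\alpha_1,\dots,\alpha_a\}\subseteq\cS}\ \prod_{i=1}^a\ex[I_{\alpha_i}]\le\frac1{a!}\Big(\sum_{\alpha\in\cS}\ex[I_\alpha]\Big)^{a}=\frac{\mu^a}{a!},$$
where the middle inequality holds because expanding the $a$-th power produces, among other nonnegative terms, each unordered $a$-subset exactly $a!$ times. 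Finally I would apply the elementary bound $a!\ge (a/e)^a$ to obtain $\pr[Y\ge a]\le \mu^a/a!\le(e\mu/a)^a$, as desired.

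The argument is short, and there is no serious obstacle; the one point that deserves care is the second step, where it is precisely the hyperedge-disjointness built into the definition of $Y$ that lets one factor the joint probability into a product of single-term expectations. One should also check that this disjointness can then be discarded harmlessly when passing to the clean estimate $\mu^a/a!$, which is immediate since all summands are nonnegative. Everything else is a routine first-moment calculation together with Stirling's inequality.
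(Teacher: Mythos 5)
Your proof is correct and is the standard argument for the Erd\H{o}s--Tetali upper tail bound: a union bound over unordered pairwise hyperedge-disjoint $a$-tuples, factoring the joint probability via the independence of disjoint blocks of hyperedges, relaxing to all $a$-subsets to get $\mu^a/a!$, and finishing with $a!\ge (a/e)^a$. The paper itself gives no proof of this statement---it is quoted directly from \cite{erdtet}---and your argument is essentially the one in that source, so there is nothing to compare beyond noting that every step checks out.
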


We also require a lower tail bound on $Y$. For all $\alpha, \alpha' \in \cS$ with $\alpha \neq \alpha'$, we write $\alpha \sim \alpha'$ if $\alpha \cap \alpha'\neq \emptyset$. Define
$$\Delta:= \sum_{\alpha' \sim \alpha} \ex[I_\alpha I_{\alpha'}],$$
where the sum is over all ordered pairs $\alpha' \sim \alpha$ in $\cS$. Also, let%
   \COMMENT{DK: changed $\sum_{\alpha' \sim \alpha}$ to $\sum_{\alpha'\in \cS: \alpha' \sim \alpha}$ below}
$$\eta:=\max_{\alpha \in \cS} \ex[I_\alpha] \hspace{1cm}\text{ and } \hspace{1cm} \nu:=\max_{\alpha \in \cS} \sum_{\alpha'\in \cS: \alpha' \sim \alpha} \ex[I_{\alpha'}].$$
The following bound follows from Lemma 4.2 in Chapter 8 and Theorem A.15 in \cite{probmeth}, see \cite{greedygraph}.

\begin{theorem}\label{thm:janson}
Let $\eps>0$. Then $\pr[Y\leq (1-\eps)\mu]\leq e^{(1-\eps)\mu\nu+\frac{\Delta}{2(1-\eta)}-\frac{\eps^2\mu}{2}}$.
\end{theorem}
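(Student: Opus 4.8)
The plan is to derive the bound from the generalised Janson inequality together with the standard Chernoff lower-tail estimate, as in \cite{greedygraph}; concretely, the two ingredients are Lemma~4.2 of Chapter~8 and Theorem~A.15 of \cite{probmeth}. The first step is to replace $Y$ by a quantity expressed directly through the indicators $I_\alpha$. Put $W:=\sum I_\alpha I_{\alpha'}$, the sum ranging over the unordered pairs $\{\alpha,\alpha'\}\subseteq\cS$ with $\alpha\sim\alpha'$, so that $\ex[W]=\Delta/2$. From the random family of \emph{all} elements of $\cS$ present in $H_{n,p}$ (which has size $X$) we may delete one member of each present conflicting pair, and are left with a hyperedge-disjoint present subfamily; hence $Y\ge X-W$, and it suffices to bound $\pr[X-W\le(1-\eps)\mu]$.

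For this I would use the exponential-moment (Laplace-transform) method rather than a union bound over the events ``$X$ small'' and ``$W$ large'': for a suitable $\lambda>0$,
$$\pr[X-W\le(1-\eps)\mu] \le e^{\lambda(1-\eps)\mu}\,\ex\big[e^{-\lambda X}e^{\lambda W}\big].$$
The factor $e^{-\lambda X}$ is handled by the generalised Janson inequality: up to the multiplicative correction $\exp\big(\tfrac{\Delta}{2(1-\eta)}\big)$, which encodes the dependencies between the events $\{I_\alpha=1\}$, it behaves essentially as in the fully independent case, that is, like $\prod_{\alpha}\big(1-(1-e^{-\lambda})\ex[I_\alpha]\big)$. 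Optimising $\lambda$ against the prefactor $e^{\lambda(1-\eps)\mu}$ is exactly the computation underlying the Chernoff bound $\pr[\mathrm{Bin}\le(1-\eps)\mu]\le e^{-\eps^2\mu/2}$ (Theorem~A.15 of \cite{probmeth}), which is where the term $-\eps^2\mu/2$ comes from. The remaining factor $e^{\lambda W}$ is bounded by relating $W$ to the $I_\alpha$ and to $\nu=\max_{\alpha}\sum_{\alpha'\sim\alpha}\ex[I_{\alpha'}]$, which contributes the term $(1-\eps)\mu\nu$. Combining the three contributions gives $\pr[Y\le(1-\eps)\mu]\le e^{(1-\eps)\mu\nu+\Delta/(2(1-\eta))-\eps^2\mu/2}$.

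The main obstacle is precisely this combination: $X$ and $W$ are positively correlated --- when many elements of $\cS$ are present, many conflicting pairs are present too --- so one cannot simply multiply a lower-tail bound for $X$ by an upper-tail bound for $W$ without losing constant factors in the exponent and spoiling the clean coefficient of $-\eps^2\mu/2$. Routing everything through a single Laplace transform, and invoking the two cited results, is what lets one split the accounting of these correlations into a ``product part'', governed by $\nu$ and absorbed into $\mu\nu$, and a ``genuine overlap part'', governed by $\Delta$ and $\eta$ and absorbed into $\Delta/(2(1-\eta))$. What is left is a routine optimisation over $\lambda$, which I would not spell out here.
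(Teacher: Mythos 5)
Your opening reduction $Y\ge X-W$ (delete one element of each present conflicting pair) is correct, but it is not the route behind this lemma, and the rest of the sketch has a genuine gap: the treatment of $W$. You assert that the factor $e^{\lambda W}$ ``is bounded by relating $W$ to the $I_\alpha$ and to $\nu$, which contributes the term $(1-\eps)\mu\nu$'', but no mechanism is offered, and none is available in this generality. The quantity $W$ is a count of overlapping pairs of copies, i.e.\ exactly the kind of polynomial of independent indicators whose upper tail and exponential moments are \emph{not} controlled by its mean: even in the most optimistic (Poisson-like) scenario one gets $\ex[e^{\lambda W}]\approx e^{(e^\lambda-1)\Delta/2}$, which with the Chernoff-optimal $\lambda\approx\eps$ contributes a term of order $\eps\Delta$ that appears nowhere in the target bound and cannot be charged to $\Delta/(2(1-\eta))$, since you have already spent that term on the Janson correction for $\ex[e^{-\lambda X}]$. (There is also an unstated step in splitting $\ex[e^{-\lambda X}e^{\lambda W}]\le\ex[e^{-\lambda X}]\,\ex[e^{\lambda W}]$; this is salvageable via Harris/FKG since $e^{-\lambda X}$ is decreasing and $e^{\lambda W}$ increasing in the edge indicators, but you do not say so.) In short, each term of the stated bound has been pattern-matched to a piece of your decomposition, but the decomposition does not actually produce those terms.

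The cited proof (from \cite{greedygraph}, using the two results from \cite{probmeth} named in the text) conditions on the witness for the event rather than deleting: if $Y\le s:=(1-\eps)\mu$, fix a maximum pairwise disjoint present family $\{\alpha_1,\dots,\alpha_j\}$, $j\le s$; by maximality no element of $\cS$ edge-disjoint from $\alpha_1\cup\dots\cup\alpha_j$ is present, and this event is independent of the event that all $\alpha_i$ are present. The Boppana--Spencer form of Janson's inequality (Lemma~4.2 of Chapter~8) bounds the probability that no such disjoint element is present by $e^{-\mu_0+\Delta/(2(1-\eta))}$, where $\mu_0\ge\mu-j\nu\ge\mu-(1-\eps)\mu\nu$ --- this is where $(1-\eps)\mu\nu$ really comes from. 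Summing over the witnesses gives the factor $\sum_{j\le s}\mu^j/j!=e^{\mu}\,\pr[\mathrm{Po}(\mu)\le s]$, and Theorem~A.15 (the Poisson lower tail) supplies $e^{-\eps^2\mu/2}$ --- so that term comes from the entropy of the witness, not from optimising a Laplace transform of $X$. You would need to either adopt this conditioning argument or supply a genuine bound on the exponential moment of $W$; as written the proposal does not prove the lemma.
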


\section{Proof of Theorem~\ref{thm:main}}\label{sec:proof}

\subsection{Basic parameters}

Let $F$ be a strictly $k$-balanced $k$-uniform hypergraph which has $v$ vertices and $h$ hyperedges. Let $d:=\Delta_{k-1}(F)$ and choose positive constants $c_1, c_2$ satisfying
$$1/n \ll 1/c_1\ll 1/c_2\ll 1/v,1/h.$$
(Here the notation $a \ll b$ means that we can find an increasing function $f$ for which all of the conditions in the proof are
satisfied whenever $a \leq f(b)$.) Given functions $f$ and $g$, we will write $f=\tilde{O}(g)$ if there exists a constant $c$ such that $f(n)\leq (\log n)^cg(n)$ for all sufficiently large~$n$.

Set%
\COMMENT{In Theorem~\ref{thm:main}, $c=c_1/c_2$.}
$$p:=\frac{1}{c_2(n^{v-k}\log{n})^{1/(h-1)}}\hspace{1cm}\text{ and }\hspace{1cm} t:=c_1np(\log{n})^{3/(d-1)}.$$
Here $p$ is chosen to be as large as possible subject to the constraint that a.a.s.~only a small proportion of the hyperedges of $H_{n,p}$ lie in a copy of $F$.
For each $k+1\leq i \leq v$,%
\COMMENT{note that $F'\neq F$, so it is ok to let $i=v$, just get $h_v=h-1$.}
 we define 
$$h_i:=\max\{e(F'): F'\subsetneq F, |F'|=i\}.$$
Since $F$ is strictly $k$-balanced, we have
$$\frac{h-1}{v-k}>\frac{h_i-1}{i-k}.$$
So for each $k+1\leq i \leq v$ we can define a positive constant
\begin{equation}\label{eq:deltai}
\delta_i:=i-k-(h_i-1)\frac{v-k}{h-1}>0.
\end{equation}
Let%
\COMMENT{AT: changed $<v$ to $\leq v$ in definition of $\delta$.}
$$\delta:=\min\{\delta_i: k+1\leq i \leq v\}.$$
We will often use that for $k+1\leq i \leq v$
\begin{align}\label{eq:qbound}
n^{v-i}p^{h-h_i}\leq  n^{v-i-\frac{v-k}{h-1}(h-h_i)}\stackrel{\eqref{eq:deltai}}{=}n^{v-i-\frac{v-k}{h-1}(h-1-\frac{i-k-\delta_i}{v-k}(h-1))} = n^{-\delta_i}\leq n^{-\delta}.
\end{align}
Note that this bounds the expected number of extensions of a fixed subgraph of $F$ on $i$ vertices into copies of $F$ in $H_{n,p}$.

\subsection{Many copies of $F$ containing a fixed hyperedge}\label{sec:clustering}

For a given hyperedge $f\in E(K_n^k)$, an \emph{$(r,f)$-cluster} is a collection $F_1, F_2, \dots, F_r$ of $r$ copies of $F$ such that each $F_i$ contains $f$ and for each $1< i \leq r$, there exists $f_i\in E(F_i)$ such that $f_i\notin E(F_j)$ for any $j<i$. Define $\cA$ to be the event that $H_{n,p}$ has no $(\log{n},f)$-cluster for any hyperedge $f$. We will bound the probability of $\cA^c$, i.e., the probability that $H_{n,p}$ has a $(\log{n},f)$-cluster for some hyperedge $f$.

\begin{lemma}\label{lem:A}
We have $\pr[\cA^c]\leq n^{-k}$.
\end{lemma}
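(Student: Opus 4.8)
The plan is to bound $\pr[\cA^c]$ by a union bound over hyperedges $f$ and over the "shape" of a potential $(\log n, f)$-cluster, using the definition of the cluster to control how many new hyperedges each successive copy of $F$ contributes. First I would fix $f \in E(K_n^k)$ and estimate the probability that $H_{n,p}$ contains an $(r,f)$-cluster $F_1, \dots, F_r$ with $r = \lceil \log n \rceil$. The point of the definition is that each $F_i$ with $i>1$ contributes at least one hyperedge $f_i$ not used by any earlier $F_j$; so the union $F_1 \cup \cdots \cup F_r$ has at least $r$ hyperedges beyond... more precisely, adding $F_i$ to $F_1 \cup \cdots \cup F_{i-1}$ adds at least one new hyperedge, and I will argue it adds a controlled number of new vertices relative to new hyperedges.

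The key mechanism is the $k$-balancedness of $F$ together with \eqref{eq:qbound}. When we attach a copy $F_i$ sharing some subgraph with $\bigcup_{j<i} F_j$, the intersection of $F_i$ with the earlier copies is a subgraph of $F$ (on at least $k$ vertices, since both contain $f$), and the number of genuinely new vertices $a_i$ and new hyperedges $b_i \geq 1$ contributed by $F_i$ satisfy the relation that governs \eqref{eq:qbound}: if the overlap has $i'$ vertices, then $a_i = v - i'$ new vertices and $b_i = h - (\text{edges in overlap}) \geq h - h_{i'}$ new hyperedges, so the expected number of ways to attach $F_i$ is at most $n^{a_i} p^{b_i} \leq n^{v-i'} p^{h - h_{i'}} \leq n^{-\delta}$ by \eqref{eq:qbound} when the overlap is a proper subgraph on at least $k+1$ vertices. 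The degenerate cases — where the overlap has exactly $k$ vertices (just $f$ itself) or where $F_i$ coincides on more structure — need separate accounting: if the overlap is only $f$, the attachment costs $n^{v-k}p^{h-1} \asymp \log n$ (by choice of $p$), which is not small, but there are at most a bounded number of such "disjoint-off-$f$" copies possible before... actually this is the subtle point. So the probability of a given cluster shape with $r$ copies is roughly a product of $r$ factors, each either $\tilde O(1)$ (overlap $= f$) or $n^{-\Omega(1)}$ (larger proper overlap), times the number of shapes and embeddings, which is bounded by something like $n^{O(v)}$ per copy times $(O(1))^r$ for combinatorial choices.

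The main obstacle — and the heart of the lemma — is handling the copies whose overlap with the earlier union is exactly $f$, since each such copy contributes a factor of order $\log n$ rather than a negative power of $n$, so naively a cluster of $\log n$ such copies would have probability $(\log n)^{\log n} = n^{\log\log n}$, which is far too large. The resolution must be that a cluster cannot consist only of such copies: once we have collected enough copies sharing only $f$, any further copy of $F$ through $f$ that contributes a new hyperedge must overlap the existing union in more than $f$ (because the vertices available have been "used up" in a sense, or rather: a new copy either is vertex-disjoint off $f$ from all previous — and there can be at most $O(1)$ such before the expected count, which is $(\log n)^{O(1)} \cdot n^{\text{something}}$... ). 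Actually the cleaner argument: I would iterate the attachment bound and observe that the total expected number of $(r,f)$-clusters is at most $n^{k} \cdot (\text{stuff})^r \cdot (\log n)^{O(r)} \cdot n^{-\delta(r - O(1))}$, because only boundedly many of the $r$ copies can attach along exactly $f$ — specifically, if $F_i$ attaches along exactly $f$ and contributes new hyperedge $f_i$, then $f_i$ together with $f$ spans at most $2k$ vertices, and after a bounded number of steps every new $f_i$ must share a vertex with the growing union outside $f$, forcing the larger overlap. Summing the resulting bound $n^{k} (\log n)^{O(\log n)} n^{-\delta \log n/2}$ over the $\binom{n}{k}$ choices of $f$ gives $n^{-\delta \log n / 2 + O(k)} (\log n)^{O(\log n)} = o(n^{-k})$ since the $n^{-\delta \log n/2}$ factor dominates any polylogarithmic power. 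I expect the delicate bookkeeping to be: (i) precisely defining the overlap sequence so that \eqref{eq:qbound} applies with a strict proper subgraph, and (ii) showing rigorously that at most $O_{v,h}(1)$ indices $i$ can have overlap exactly equal to $\{f\}$, which is where the hypothesis $\Delta_{k-1}(F) \geq 2$ (equivalently $h \geq v-k+1$, ensuring $v - k \leq h-1$) and the structure of $F$ come into play.
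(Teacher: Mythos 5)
There is a genuine error at the heart of your proposal. You assert that attaching a copy of $F$ whose overlap with the existing union is exactly $f$ ``costs $n^{v-k}p^{h-1}\asymp \log n$'', and you then build the rest of the argument around the (alleged) need to show that only $O(1)$ of the $r$ copies can attach in this way. But with the paper's choice $p=\frac{1}{c_2(n^{v-k}\log n)^{1/(h-1)}}$ one has $n^{v-k}p^{h-1}=\frac{1}{c_2^{h-1}\log n}=o(1)$; this is precisely the meaning of the remark that $p$ is chosen so that only a small proportion of hyperedges of $H_{n,p}$ lie in a copy of $F$. So the ``overlap $=f$'' case is not the problematic one — it is the cheapest one, contributing a factor $O(1/\log n)$ per attachment. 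Consequently the actual proof is much simpler than what you outline: writing $Z_{r,f}$ for the number of $(r,f)$-clusters, every attachment step (whether the new copy meets the current cluster in only the $k$ vertices of $f$, costing $hn^{v-k}p^{h-1}$, or in $i\geq k+1$ vertices, costing $\tilde{O}(n^{v-i}p^{h-h_i})\leq \tilde{O}(n^{-\delta})$ by \eqref{eq:qbound}) contributes an expected factor at most $e^{-2k}$, so $\ex[Z_{\log n,f}]\leq e^{-2k\log n}=n^{-2k}$, and the union bound over $\binom{n}{k}$ choices of $f$ gives $n^{-k}$.

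Beyond the numerical slip, the structural workaround you propose does not hold: there is no deterministic reason why at most $O_{v,h}(1)$ of the copies $F_i$ can meet the union of the earlier ones in exactly $f$ — there are $\binom{n-k}{v-k}$ available vertex sets, so $K_n^k$ contains $(\log n,f)$-clusters of this shape in abundance; they are merely unlikely to appear in $H_{n,p}$, for the probabilistic reason above. Nor is $\Delta_{k-1}(F)\geq 2$ or $h\geq v-k+1$ used in this lemma (those hypotheses enter in the proof of Lemma~\ref{lem:XS}). Your overall framework — fixing $f$, iterating an attachment bound, and invoking \eqref{eq:qbound} for proper overlaps on at least $k+1$ vertices — is the same as the paper's, but as written the proof does not close because the central quantitative claim is inverted and the proposed repair is false.
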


\begin{proof}
Fix some $f\in E(K_n^k)$. Write $Z_{r,f}$ for the number of $(r,f)$-clusters in $H_{n,p}$, so $Z_{1,f}$ counts copies of $F$ which contain the hyperedge $f$. There are $h$ hyperedges in $F$ which could be mapped to $f$, so
$$\ex[Z_{1,f}]\leq hn^{v-k}p^h\leq e^{-2k}$$
with room to spare.
Let $r<\log n$ and consider a fixed $(r,f)$-cluster $C$ in $H_{n,p}$. Let $Z_C$ be the number of $(1,f)$-clusters in $H_{n,p}$ which contain at least one hyperedge which does not lie in $C$, so each of these $(1,f)$-clusters together with $C$ forms an $(r+1,f)$-cluster. Suppose that $\alpha$ is a $(1,f)$-cluster sharing $k+1\leq i\leq v$ vertices with $C$. The set of hyperedges shared by $\alpha$ and $C$ form a proper subgraph of $F$ on $i$ vertices. Since $F$ is strictly $k$-balanced, $\alpha$ and $C$ can have at most $h_i$ common hyperedges. This allows us to estimate $\ex[Z_C]$ as%
\COMMENT{the $h$ and $v^i$ factors account for different ways (permutations on the vertices) to map the copy of $F$.}
\begin{align*}
\ex[Z_C]&\leq hn^{v-k}p^{h-1}+\sum_{i=k+1}^{v} v^i(rv)^{i-k}n^{v-i}p^{h-h_i}
\stackrel{\eqref{eq:qbound}}{\leq} e^{-3k}+\tilde{O}(n^{-\delta})\leq e^{-2k}.
\end{align*}
If we sum over all $(r,f)$-clusters in $K_n^k$, we find that
\begin{align*}
\ex[Z_{r+1,f}]&\leq \ex[Z_{r,f}]e^{-2k} \leq e^{-2(r+1)k}
\end{align*}
and hence $\ex[Z_{\log n,f}]\leq n^{-2k}$.
By summing over all $f\in E(K_n^k)$, we obtain
\begin{align*}
\pr[\cA^c]\leq \binom{n}{k}n^{-2k}\leq n^{-k},
\end{align*}
as required.
\end{proof}

\subsection{Estimating the number of extensions of a fixed set}\label{sec:estimates}

Recall that $d=\Delta_{k-1}(F)$.%
   \COMMENT{DK reworded this para}
Let $U=\{u_1, u_2, \dots, u_{k-1}\}\subseteq V(F)$ be such that $d_F(U)=d$. Let $N_F(U)$ denote the neighbourhood
of $U$ in~$F$, i.e.~$N_F(U):=\{x\in V(F): U\cup \{x\}\in E(F)\}$. Define $\hat{F}\subseteq F$
which has vertex set $V(F)$ and all hyperedges $f\in E(F)$ such that $|f\cap U|\leq k-2$.
Fix $T\subseteq [n]$ of size $t$ and an ordered sequence $V=(v_1, v_2, \dots, v_{k-1})$ of distinct vertices,
where $v_i \in [n]\setminus T$ for each $1\leq i \leq k-1$. Given a hypergraph $H\subseteq K_n^k$, let $\cS(H)=\cS(H,T,V)$ be the set of all copies of $\hat{F}$ in $H$ such that the following hold:
\begin{itemize}
	\item for each $1\leq i \leq k-1$, $u_i$ is mapped to $v_i$;
	\item $N_F(U)$ is mapped into $T$ and
	\item $V(F)\setminus N_F(U)$ is mapped into $[n]\setminus T$.
\end{itemize}
We let%
   \COMMENT{DK changed $X_{\cS}$ and $X^-_{\cS}$ to $X$ and $X^-$ below and moved def of $\cS:=\cS(K_n^k)$ to beginning of proof.
Also very slightly reworded next para and statement of lemma and made penultimate inequality in (\ref{eq:mu1}) to equality}
$X:=|\cS(H_{n,p})|$ and $X^-:=|\cS(H_{n,p}^-)|$. Note that $X^- \leq X$ since $H_{n,p}^- \subseteq H_{n,p}$.

Note that if $T\subseteq N_{R_{n,1}}(V)$, then $\cS(R_{n,1})=\emptyset$, as otherwise we could find a copy of $F$ in $R_{n,1}$. Since $H_{n,p}^-\subseteq R_{n,1}$, it follows that $X^-=0$. So, in order to prove Theorem~\ref{thm:main}, it will suffice to prove that a.a.s. we have $X^->0$ for any choice of $T, V$. 

\begin{lemma}\label{lem:XS}
Given $T\subseteq [n]$ of size $t$ and an ordered sequence $V=(v_1, v_2, \dots, v_{k-1})$ of distinct vertices,
where $v_i \in [n]\setminus T$ for each $1\leq i \leq k-1$, define $X^-$ as above. Then
$$\pr[(X^-=0) \cap \cA]\leq 2n^{-2t}.$$
\end{lemma}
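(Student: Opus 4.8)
The plan is to show $X^->0$ with high probability by combining a second-moment-type estimate for the number of copies of $\hat F$ in $H_{n,p}$ with a union bound controlling the number of these copies that get destroyed when we pass to $H_{n,p}^-$. First I would estimate $\mu:=\ex[X]$: each copy of $\hat F$ in $\cS(H_{n,p})$ uses $v-k+1$ free vertices ($v$ minus the $k-1$ fixed vertices $v_i$), of which $|N_F(U)|$ go into $T$ (contributing a factor of roughly $t^{|N_F(U)|}$) and the rest into $[n]\setminus T$, and it has $e(\hat F)=h-d$ hyperedges, so $\mu$ is of order $t^{|N_F(U)|} n^{v-k+1-|N_F(U)|} p^{h-d}$. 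Plugging in $t=c_1 np(\log n)^{3/(d-1)}$ and $p=(c_2(n^{v-k}\log n)^{1/(h-1)})^{-1}$ and using $|N_F(U)|\ge d\ge 2$ (here the co-degree hypothesis $\Delta_{k-1}(F)\ge 2$ is essential, as flagged in the source \COMMENT), one checks $\mu$ is at least a large power of $\log n$ times $n^{\text{something nonnegative}}$ — in particular $\mu \gg t$ — so that $e^{-\varepsilon^2\mu/2}$ will beat $n^{-2t}$ with room to spare.

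Next I would apply Theorem \ref{thm:janson} with $\cS=\cS(K_n^k)$ and a constant $\varepsilon$ (say $\varepsilon=1/2$) to get $\pr[Y\le \mu/2]\le e^{(1-\varepsilon)\mu\nu + \Delta/(2(1-\eta)) - \varepsilon^2\mu/2}$, where $Y$ is the largest hyperedge-disjoint subfamily present in $H_{n,p}$. For this I need $\mu\nu$ and $\Delta$ to be small compared to $\mu$. The quantity $\eta=\max_\alpha p^{h-d}$ is tiny, so $1-\eta\ge 1/2$. The key point is bounding $\nu=\max_\alpha \sum_{\alpha'\sim\alpha}\ex[I_{\alpha'}]$: fixing $\alpha$ and summing over $\alpha'$ sharing at least one hyperedge — hence at least $k+1$ vertices — with $\alpha$, the overlap forms a subgraph on some $k+1\le i\le v$ vertices with at most $h_i$ edges (here I use that $F$, and hence $\hat F$, has the strict $k$-balancedness property, or more precisely that any common subgraph on $i\ge k+1$ vertices has $\le h_i$ edges), so by the estimate \eqref{eq:qbound} the contribution of each overlap size is $\tilde O(n^{-\delta})$ (possibly with extra factors $t^{O(1)}$ from choosing where the shared $T$-vertices go, but these are polynomially bounded and killed by $n^{-\delta}$ provided $t\le n^{1-\delta'}$, which holds). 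Hence $\nu=\tilde O(n^{-\delta})=o(1)$, giving $\mu\nu = o(\mu)$, and similarly $\Delta = \sum_{\alpha}\sum_{\alpha'\sim\alpha}\ex[I_\alpha I_{\alpha'}] \le \mu\cdot\nu\cdot(\text{small}) = o(\mu^2)$ but more usefully $\Delta/(2(1-\eta)) \le \mu\nu = o(\mu)$. So the exponent is $\le (1-\varepsilon)\cdot o(\mu) + o(\mu) - \varepsilon^2\mu/2 < -\mu/10$ for large $n$, and since $\mu > 10 t$ this gives $\pr[Y\le\mu/2]\le e^{-\mu/10}\le n^{-2t}$.

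Finally I would relate $Y$ to $X^-$: condition on the event $\cA$ that $H_{n,p}$ has no $(\log n, f)$-cluster. If $\cS'$ is a hyperedge-disjoint subfamily of size $Y$ present in $H_{n,p}$, then each $\alpha\in\cS'$ is a copy of $\hat F$ which, together with the $d\ge 1$ hyperedges of $F$ through $U$ (mapped into $T\cup V$), would complete to a copy of $F$; $\alpha$ is destroyed in $H_{n,p}^-$ only if $H_{n,p}$ contains a copy of $F$ using one of the hyperedges of $\alpha$. I want to say that on $\cA$ not too many of the $\alpha\in\cS'$ can be destroyed: a single copy of $F$ in $H_{n,p}$ meets at most a bounded number of the hyperedge-disjoint $\alpha$'s (since $F$ has only $h$ hyperedges), and the copies of $F$ destroying members of $\cS'$ all pass through hyperedges incident to the "core" region, which on $\cA$ cannot cluster too much — more precisely, the total number of copies of $F$ in $H_{n,p}$ sharing a hyperedge with the fixed copy of $F$ obtained by adding the $U$-edges back to any fixed $\alpha$ is $O(\log n)$ by the no-cluster condition, so the number of destroyed $\alpha$ is $O(Y/\log n)$ or an additive $\tilde O(1)$ term. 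Hence $X^-\ge Y - o(Y) - \tilde O(1) \ge \mu/2 - o(\mu) > 0$. Combining, $\pr[(X^-=0)\cap\cA] \le \pr[Y\le\mu/2] + \pr[(\,Y>\mu/2,\ X^-=0\,)\cap\cA] \le n^{-2t} + 0 \le 2n^{-2t}$ (the small slack in the constant absorbs any lower-order terms). The main obstacle I anticipate is the last step — making precise and quantitatively tight the claim that the no-cluster event $\cA$ limits the number of $\hat F$-copies in a disjoint family that can be killed when deleting $F$-copies, since destroyed copies can in principle be killed by many different small copies of $F$ and one has to show these cannot pile up on a hyperedge-disjoint family; getting the arithmetic so that the loss is $o(\mu)$ rather than a constant fraction is where care is needed.
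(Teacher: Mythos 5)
Your first two steps match the paper's: the lower bound on $\mu_1=\ex[X]$ (the paper gets $\mu_1\geq 24h^2t(\log n)^2$, which is exactly the ``$\mu\gg t$ with room to spare'' you need) and the application of Theorem~\ref{thm:janson} with the bounds $\nu=\tilde{O}(n^{-\delta})$, $\Delta\leq\mu_1\nu^*$, $\eta=p^{h-d}$ to get $\pr[Y_1\leq\mu_1/2]\leq e^{-\mu_1/10}\leq n^{-2t}$ are carried out in essentially the same way in the paper.

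The final step, however, has a genuine gap, and it is precisely the one you flag yourself. You claim that on $\cA$ the number of destroyed members of the hyperedge-disjoint family $\cS'$ is $O(Y/\log n)$ plus $\tilde{O}(1)$, deducing this from the fact that the copies of $F$ through any fixed hyperedge cannot cluster. But $\cA$ is a purely local condition: it controls the copies of $F$ through a \emph{single} hyperedge $f$, whereas the elements of $\cS'$ are hyperedge-disjoint, so each $\alpha\in\cS'$ could in principle be killed by its own private copy of $F$ living far from all the others. Nothing in $\cA$ rules out every single $\alpha\in\cS'$ being destroyed, so your bound ``$\pr[(Y>\mu/2,\ X^-=0)\cap\cA]\le 0$'' does not hold. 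The paper closes this gap with a second, independent probabilistic estimate: it defines a cluster as a pair $(\alpha,F')$ with $\alpha\in\cS'(H_{n,p})$ and $F'$ a copy of $F$ sharing a hyperedge with $\alpha$, computes $\mu_2=\ex[\#\text{clusters}]\leq\mu_1/(12e^2h^2\log n)$, and applies the Erd\H{o}s--Tetali upper tail bound (Theorem~\ref{thm:erdtet}) to show that with probability at least $1-n^{-2t}$ any hyperedge-disjoint collection of clusters has size less than $\mu_1/(12h^2\log n)$. Only then does $\cA$ enter, and only to bound the \emph{maximum degree} of the auxiliary graph $G$ on the destroyed $\alpha$'s by $2h^2\log n$; the inequality $|G|\leq(\Delta(G)+1)\alpha(G)$ then gives $|G|\leq\mu_1/4<Y_1$, so some $\alpha$ survives. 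The factor $2$ in $2n^{-2t}$ comes from the union of the two failure events ($Y_1\leq\mu_1/2$ and $\alpha(G)\geq\mu_1/(12h^2\log n)$), not from absorbing lower-order terms. Without this second concentration argument your proof does not go through.
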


\begin{proof}
Write $\cS:=\cS(K_n^k)$. Note that%
\COMMENT{$\geq 24h^2t(\log n)^2$ holds for any $c_1^{d-1} \geq 24h^2c_2^{h-1}d^dv^v$.}
\begin{align}\label{eq:mu1}
\mu_1&:=\ex[X] \geq \binom{t}{d}\binom{n-t-k+1}{v-d-k+1}p^{h-d}\geq \frac{tt^{d-1}n^{v-d-k+1}p^{h-d}}{d^dv^v}\nonumber\\
&=\frac{tc_1^{d-1}n^{v-k}p^{h-1}(\log n)^3}{d^dv^v} 
= \frac{c_1^{d-1}}{d^dv^vc_2^{h-1}}t(\log n)^2\geq 24h^2t(\log n)^2.
\end{align}
Let $\cS'(H_{n,p})$ be a hyperedge-disjoint collection of elements of $\cS(H_{n,p})$ of maximum size and let $Y_1:=|\cS'(H_{n,p})|$. In order to apply Theorem~\ref{thm:janson}, we will estimate $\nu$, $\Delta$ and $\eta$.

First we estimate $\nu$. Define
$$\nu^*:=\max_{\alpha \in \cS} \sum_{\alpha'\in \cS: \alpha' \sim \alpha} \ex[I_{\alpha'}\mid I_{\alpha}=1]$$
and note that $\nu\leq \nu^*$. We count the expected number of elements $\alpha' \in \cS(H_{n,p})\setminus \{\alpha\}$ sharing%
   \COMMENT{DK changed $\alpha' \in \cS(H_{n,p})$ to $\alpha' \in \cS(H_{n,p})\setminus \{\alpha\}$. Also once more below.}
at least one hyperedge with some fixed element $\alpha \in \cS$. Note that $\alpha$ and $\alpha'$ must share at least two vertices outside $V$. We let $k+1\leq i+j\leq v$%
\COMMENT{we can have $i+j=v$ if $V(\alpha)=V(\alpha')$ but the edge sets are different.}
 denote the number of shared vertices, where $i$ is the number of vertices shared in $T$. Consider any $\alpha'\in \cS\setminus \{\alpha\}$ sharing $i+j$ vertices with $\alpha$. Let $K$ be the hypergraph on $i+j$ vertices formed by the set of hyperedges shared by $\alpha$ and $\alpha'$. Let $K'$ be the hypergraph on $i+j$ vertices obtained from $K$ by adding all hyperedges of the form $V\cup x$ for each of the $i$ vertices $x\in T$ shared by $\alpha$ and $\alpha'$. Since $K'\subsetneq F$, $e(K')\leq h_{i+j}$ and so $\alpha$ and $\alpha'$ can have at most $h_{i+j}-i$ common hyperedges. Then 
\begin{align*}
\nu &\leq \nu^*\leq \sum_{i+j=k+1}^{v} v^{i+j}t^{d-i}n^{v-d-j}p^{h-d-(h_{i+j}-i)}\\
&=\sum_{i+j=k+1}^{v} v^{i+j}(c_1(\log{n})^{\frac{3}{d-1}})^{d-i}n^{v-(i+j)}p^{h-h_{i+j}} \stackrel{\eqref{eq:qbound}}{=}\tilde{O}(n^{-\delta})=o(1).
\end{align*}
Since $\Delta$ counts the expected number of ordered pairs of elements in $\cS(H_{n,p})$ which share at least one hyperedge, we have
\begin{align*}
\Delta \leq \mu_1\nu^*=o(\mu_1).
\end{align*}
Finally, the probability of a fixed element in $\cS$ being present in $H_{n,p}$ is given by
$$\eta=p^{h-d}=o(1).$$
So we can apply Theorem~\ref{thm:janson} to see that
\begin{align}
\pr[Y_1\leq \mu_1/2]\leq e^{-\mu_1/10}\stackrel{\eqref{eq:mu1}}{\leq} n^{-2t}.
\label{eq:Y1}
\end{align}

We define a \emph{cluster} $(\alpha, F')$ to be the union of an element $\alpha\in \cS'(H_{n,p})$ and a copy $F'$ of $F$ in $H_{n,p}$ which share at least one hyperedge. Note that deleting $F'$ from $H_{n,p}$ to form $H_{n,p}^-$ will destroy $\alpha$. 

We define an auxiliary graph $G$ as follows. For each element of $\cS'(H_{n,p})$ which lies in a cluster, choose one. These clusters form the vertices of $G$. Draw an edge between two vertices in $G$ if the corresponding clusters share a hyperedge. We will use that
\begin{equation}\label{eq:grapheq}
|G|\leq (\Delta(G)+1)\alpha(G)
\end{equation}
(which holds for all graphs) to bound the number of vertices in $G$. We will show that with sufficiently high probability $|G|<Y_1$.
(This in turn implies that at least one element of $\cS'(H_{n,p})$ will remain in $H_{n,p}^-$, i.e.~$X^->0$.)%
    \COMMENT{DK reworded last sentence}

First, we bound $\alpha(G)$. Let $X_2$ be the number of clusters in $H_{n,p}$. We estimate $\mu_2:=\ex[X_2]$, breaking the sum into parts depending on the number $i$ of vertices shared by $\alpha$ and $F'$ in each cluster $(\alpha,F')$. For $k+1\leq i \leq v$,%
\COMMENT{$i=v$ is allowed since $e(\alpha)<e(F)$.}
 we use that $\alpha$ and $F$ intersect in a proper subgraph of $F$ and thus can have at most $h_i$ common hyperedges. The first term in our bound on $\mu_2$ corresponds to those clusters $(\alpha, F')$ where $\alpha$ and $F'$ share exactly one hyperedge:%
\COMMENT{final inequality holds for any $c_2^{h-1}\geq 24e^2h^4$.}
\begin{align}\label{eq:mu2}
\mu_2&=\ex[X_2]\leq \mu_1h^2n^{v-k}p^{h-1}+\sum_{i=k+1}^{v}\mu_1v^in^{v-i}p^{h-h_i}\nonumber\\
&\stackrel{\eqref{eq:qbound}}{\leq} \mu_1h^2n^{v-k}p^{h-1}+O(\mu_1n^{-\delta}) \leq \mu_1/(12e^2h^2\log n).
\end{align}
Let $Y_2$ be a largest hyperedge-disjoint collection of clusters in $H_{n,p}$. We note that $\alpha(G) \leq Y_2$ and use Theorem~\ref{thm:erdtet} to bound $Y_2$:
\begin{align}\label{eq:indset}
\pr\left[\alpha(G)\geq \mu_1/(12h^2\log n)\right]&\leq \pr\left[Y_2\geq \mu_1/(12h^2\log n)\right]\leq \left(\frac{e\mu_212h^2\log n}{\mu_1}\right)^{\mu_2/(12h^2\log n)}\nonumber\\
&\stackrel{(\ref{eq:mu2})}{\leq} e^{-\mu_1/(12h^2\log n)}\stackrel{\eqref{eq:mu1}}{\leq} n^{-2t}.
\end{align}

We now bound $\Delta(G)$. Assume that $\cA$ holds, that is, $H_{n,p}$ does not contain a $(\log n,f)$-cluster for any hyperedge $f$. Fix some hyperedge $f\in E(H_{n,p})$. Let $\cF$ be a collection of clusters $(\alpha_i,F_i)$ such that $f\in E((\alpha_i,F_i))$ for each $i$ and $\alpha_i\neq \alpha_j$ if $i\neq j$.%
\COMMENT{AT - made a few changes to this paragraph. In particular $f\in E((\alpha_i,F_i))$ (instead of $f\in E(F_i)$), makes explaining the bound on $\Delta(G)$ simpler?}
Suppose that $|\cF|\geq h\log n+1$. For each cluster $(\alpha_i,F_i)$ in $\cF$, let $e_i$ be a hyperedge shared by $\alpha_i$ and $F_i$. The $\alpha_i$ are hyperedge-disjoint, so $f\in E(F_i)$ for all but at most one cluster $(\alpha_i,F_i)\in\cF$ where $f\in E(\alpha_i)$. If $\cF$ contains such a cluster, delete it. Then, starting with $i=1$, if $(\alpha_i,F_i)$ has not already been deleted, delete from $\cF$ any clusters $(\alpha_j, F_j)$ with $j>i$ such that $e_j$ lies in $(\alpha_i,F_i)$. Do this for each $i$ in turn. Since the $\alpha_i$ are hyperedge-disjoint, at each step we delete at most $h-1$ clusters from $\cF$. So a collection $\cF'\subseteq \cF$ of at least $\log n$ clusters remains. But the set of all $F_i$ such that $(\alpha_i,F_i)\in \cF'$ contains a $(\log n, f)$-cluster in $H_{n,p}$.
Therefore, $|\cF|< h\log n+1$.
Since every cluster has less than $2h$ hyperedges, we must have
\begin{equation}
\Delta(G)< 2h^2\log n.
\label{eq:maxdeg}
\end{equation}
So, if $\cA$ holds, if $\alpha(G)< \mu_1/(12h^2\log n)$ and if $|Y_1|\geq\mu_1/2$, then
$$|G|\stackrel{\eqref{eq:grapheq}, \eqref{eq:maxdeg}}{\leq} (2h^2\log n+1)\mu_1/(12h^2\log n)\leq \mu_1/4<|Y_1|.$$
Thus,
\begin{equation*}
\pr[(X^-=0) \cap \cA]= \pr[(|G|=Y_1)\cap \cA] \leq\pr[Y_1\leq \mu_1/2]+\pr[\alpha(G)\geq \mu_1/(12h^2\log n)]\stackrel{\eqref{eq:Y1},\eqref{eq:indset}}{\leq} 2n^{-2t},
\end{equation*}
as desired.
\end{proof}

\subsection{Combining the bounds}

We now use Lemmas~\ref{lem:A}~and~\ref{lem:XS} to prove Theorem~\ref{thm:main}.

\begin{proofof}{Theorem~\ref{thm:main}}
Define $\cB$ to be the event that there exist $T\subseteq [n]$ of size $t$ and an ordered sequence $V=(v_1, v_2, \dots, v_{k-1})$
of distinct vertices such that  $v_i \in [n]\setminus T$ for each $1\leq i \leq k-1$ and $X^-=0$. As remarked
before Lemma~\ref{lem:XS}, $\Delta_{k-1}(R_{n,1}) \geq t$ implies $\cB$. So we can apply Lemmas~\ref{lem:A}~and~\ref{lem:XS} to see that
\begin{align*}
\pr[\Delta_{k-1}(R_{n,1})\geq t]&\leq \pr[\cB]\leq \pr[\cA^c]+\pr[\cA\cap \cB]\leq n^{-k}+n^{t+k-1}(2n^{-2t})=o(1).
\end{align*}
This completes the proof of Theorem~\ref{thm:main}.
\end{proofof}

\medskip

{\footnotesize \obeylines \parindent=0pt

Daniela K\"{u}hn, Deryk Osthus, Amelia Taylor 
School of Mathematics
University of Birmingham
Edgbaston
Birmingham
B15 2TT
UK
}
\begin{flushleft}
{\it{E-mail addresses}:
\tt{\{d.kuhn, d.osthus, amt908\}@bham.ac.uk}}
\end{flushleft}


\begin{thebibliography}{10}

\bibitem{probmeth} N.~Alon and J.H.~Spencer,
\newblock {\em The Probabilistic Method}, Wiley-Interscience, New York (1992).

\bibitem{benboh} P.~Bennett and T.~Bohman,
\newblock A note on the random greedy independent set algorithm,
{\em arXiv:1308.3732}, (2013).

\bibitem{boh} T.~Bohman,
\newblock The triangle-free process,
\newblock {\em Advances in Math.}~\textbf{221} (2009), 1653--1677.

\bibitem{bohkeev} T.~Bohman and P.~Keevash,
\newblock The early evolution of the $H$-free process,
\newblock {\em Invent. Math.}~\textbf{181} (2010), 291--336.

\bibitem{bohkeev2} T.~Bohman and P.~Keevash,
\newblock Dynamic concentration of the triangle-free process,
{\em arXiv:1302.5963}, (2013).

\bibitem{bomupi} T.~Bohman, D.~Mubayi and M.~Picollelli,
\newblock The independent neighborhoods process,
{\em arXiv:1407.7192}, (2014).

\bibitem{bolrio} B.~Bollob\'as and O.~Riordan,
\newblock Constrained graph processes,
\newblock {\em Electronic J. Combin.}~\textbf{7} (2000), R18.

\bibitem{ersuwi} P.~Erd\H{o}s, S.~Suen and P.~Winkler,
\newblock On the size of a random maximal graph,
\newblock {\em Random Structures Algorithms}~\textbf{6} (1995), 309--318.

\bibitem{erdtet} P.~Erd\H{o}s and P.~Tetali,
\newblock Representation of integers as the sum of $k$ terms,
\newblock {\em Random Structures Algorithms}~\textbf{1} (1990), 245--261.

\bibitem{FGM} G.~Fiz~Pontiveros, S.~Griffiths and R.~Morris,
\newblock The triangle-free process and $R(3,k)$,
{\em arXiv:1302.6279}, (2013).

\bibitem{greedygraph} D.~Osthus and A.~Taraz,
\newblock Random maximal $H$-free graphs,
\newblock {\em Random Structures Algorithms}~\textbf{18} (2001), 61--82.

\bibitem{pico} M.~Picollelli,
\newblock The final size of the $C_\ell$-free process,
\newblock {\em SIAM Journal on Discrete Math.}~\textbf{28(3)} (2014), 1276--1305.

\bibitem{pico2} M.~Picollelli,
\newblock The diamond-free process,
\newblock {\em Random Structures Algorithms}~\textbf{45} (2014), 513-551.

\bibitem{warn} L.~Warnke,
\newblock The $C_\ell$-free process,
\newblock {\em Random Structures Algorithms}~\textbf{44} (2014), 490--526.

\bibitem{warn2} L.~Warnke,
\newblock When does the $K_4$-free process stop?,
\newblock {\em Random Structures Algorithms}~\textbf{44} (2014), 355--397.

\bibitem{wolf} G.~Wolfovitz,
\newblock The $K_4$-free process,
{\em arXiv:1008.4044}, (2010).



\end{thebibliography}
\end{document}